\newcommand{\s}{\vspace{0.3cm}}
\newtheorem{theo}{Theorem}
\newtheorem{coro}[theo]{Corollary}
\newtheorem{lemm}[theo]{Lemma}
\newtheorem{rema}[theo]{Remark}
\begin{document}

\title[Decomposing the Jacobian of Fermat curves ]{A remark on the decomposition of the Jacobian variety of Fermat curves of prime degree}
\author{Ruben A. Hidalgo and Rub\'{\i} E. Rodr\'{\i}guez}

\subjclass[2000]{14H40, 14H30}
\keywords{Fermat curves, Jacobian variety}

\address{Departamento de Matem\'atica y Estad\'{\i}stica, Universidad de La Frontera. Casilla 54-D, 4780000 Temuco, Chile}
\email{ruben.hidalgo@ufrontera.cl, rubi.rodriguez@ufrontera.cl}
\thanks{Partially supported by Fondecyt grants 1150003 and 1141099}

\begin{abstract}
Recently, Barraza-Rojas have described the action of the full automorphisms group on the Fermat curve of degree $p$, for $p$ a prime integer, and obtained the group algebra decomposition of the corresponding Jacobian variety. In this short note we observe that the factors in such a decomposition are given by the Jacobian varieties of certain $p$-gonal curves.
\end{abstract}

\maketitle

\section{Introduction}
Let $S$ be a compact Riemann surface and let $G$ be a group of conformal automorphisms of $S$. The induced action of $G$ on $H_{1}(S,{\mathbb Z})$ (the first homology group of $S$) provides the \textit{rational representation}, and its action on $H^{1,0}(S)$ (the space of holomorphic one-forms on $S$) provides the \textit{analytic} representation; they in turn  induce an action of $G$ on the Jacobian variety $JS$ of $S.$

A  relationship between the rational irreducible representations of $G$ and the $G$-invariant factors in the isotypical decomposition of $JS$ was given in  \cite{C-R,L-R}; providing a $G$-equivariant  isogenous decomposition of $JS$ as
$$JS \sim J(S/G) \times B_{2}^{u_2} \times \cdots \times B_{r}^{u_r},$$
where each $B_{i}$ is a certain abelian subvariety of $JS$. This decomposition was studied in \cite{B-R} when $S$ is the classical Fermat curve $F_p:=x^{p}+y^{p}+z^{p}=0$, where $p \geq 5$ is a prime integer; this is a Riemann surface of genus $(p-1)(p-2)/2$. In there,
by considering $G={\rm Aut}(F_{p})$, the full group of conformal automorphisms of $F_{p}$,
it was obtained that

\begin{equation} \label{dec1}
JF_{p} \sim
\left\{\begin{array}{ll}
B^{6} \times B_{0}^{3} \times B_{1}^{6} \times \cdots \times B_{(p-7)/6}^{6}, & p \equiv 1 \mod(3)\\
B_{0}^{3} \times B_{1}^{6} \times \cdots \times B_{(p-5)/6}^{6}, & p \equiv 2 \mod(3)
\end{array}
\right.
\end{equation}
where each $B_{i}$ has dimension $(p-1)/2$ and $B$ has dimension $(p-1)/6$.

Another decomposition of $JF_{p}$, using techniques of number theory, was provided in \cite{A}. In such a decomposition, there are $(p-2)$ factors, each one of dimension $(p-1)/2$, and each of them turns out to be a subvariety of CM-type. It is also described there which of these subvarieties are simple.

In this paper we observe that the factors in the decomposition of $JF_{p}$ given in \eqref{dec1}  are Jacobian varieties of certain cyclic $p$-gonal curves, as shown in Theorem \ref{teo2}.
For instance, $B_{0}=JC_{1}$, where
$C_{1}: y^{p}=x(x-1)$ (a hyperelliptic Riemann surface) and, for $p \equiv 1 \mod(3)$, $B=JE_{\gamma_{p}}$, where $E_{\gamma_{p}}=C_{\gamma_{p}}/\langle R \rangle$,
$\gamma_{p}^{2}+\gamma_{p}+1 \equiv 0 \mod (p)$,    $C_{\gamma_{p}}: y^{p}=x^{\gamma_{p}}(x-1)$ and
$R(x,y)=(1/(1-x), (-1)^{\epsilon} x^{(\gamma_{p}^{2}+\gamma_{p}+1)/p}/y^{\gamma_{p}+1})$, with $\epsilon=1$ if $\gamma_{p}$ is even and $\epsilon=2$ otherwise.

\section{Certain cyclic $p$-gonal curves}
In this section, $p \geq 5$ will denote a prime integer and $\mathbb{F}_{p}$ will denote the finite field of cardinality $p$.

\s
\subsection{An action of the symmetric group $\mathfrak{S}_{3}$}
In the set $X_{p}=\{1,\ldots,p-2\} \subset \mathbb{F}_{p}$ we consider the action of the symmetric group on three letters
$$\mathfrak{S}_{3}=\langle U,V: U^{3}=V^{2}=(UV)^{2}=1\rangle$$
given by
$$U(\alpha)=-(1+\alpha)^{-1}; V(\alpha)=\alpha^{-1},$$
where, for each element $\beta \in X_{p}$ we take the inverse $\beta^{-1}$ and the opposite $-\beta$ in the field $\mathbb{F}_{p}$.

For each $\alpha \in X_{p}$ we consider its $\mathfrak{S}_{3}$-orbit given by
$${\mathcal O}(\alpha)=\{\alpha, \alpha^{-1}, -(1+\alpha), -(1+\alpha)^{-1}, -\alpha^{-1} (1+\alpha), -\alpha (1+\alpha)^{-1}\}.$$

One may observe that the only orbit of cardinality three is given by ${\mathcal O}(1)=\{1,p-2,(p-1)/2\}$. If $p \equiv 2 \mod(3)$ and $\alpha \notin {\mathcal O}(1)$, then the cardinality of ${\mathcal O}(\alpha)$ is six. If $p \equiv 1 \mod(3)$, then there is one orbit of cardinality two, given by the orbit of $\gamma_{p} \in X_{p}$ satisfying the quadratic equation $\gamma_{p}^{2}+\gamma_{p}+1=0$ (in $\mathbb{F}_{p}$); in this case, ${\mathcal O}(\gamma)=\{\gamma_{p}, \gamma_{p}^{-1}=p-1-\gamma_{p}\}$. If $\alpha \notin \{1,p-2,(p-1)/2,\gamma_{p}, \gamma_{p}^{-1}\}$, then its orbit has cardinality six.

\s
\subsection{Some cyclic $p$-gonal curves}
For each $\alpha \in \{1,2,\ldots,p-2\}$, we consider the following cyclic $p$-gonal curve
$$C_{\alpha}: y^{p}=x^{\alpha}(x-1).
$$

The curve $C_{\alpha}$ admits as conformal automorphism the order $p$ transformation $T(x,y)=(x, \omega_{p} y)$, where $\omega_{p}=e^{2 \pi i/p}$. The map $\pi:C_{\alpha} \to \widehat{\mathbb C}$ given by $\pi(x,y)=x$ is a regular branched covering, with deck group the cyclic group generated by $T$, whose branch values are $\infty$, $0$ and $1$, each one with branch order $p$. It then follows from the Riemann-Hurwitz formula that $C_{\alpha}$ has genus $(p-1)/2$.

These curves were studied by Lefschetz in \cite{L}, where he showed that for $p \geq 11$ the group generated by $T$ is the only subgroup of order $p$ in the group of automorphisms of $C_{\alpha}$.

\noindent
\begin{lemm}
Two curves $C_{\alpha_{1}}$ and $C_{\alpha_{2}}$ are isomorphic Riemann surfaces if and only if ${\mathcal O}(\alpha_{1})={\mathcal O}(\alpha_{2})$.
\end{lemm}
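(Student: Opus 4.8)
The plan is to pass from the curves to the monodromy data of their cyclic $p$-gonal structures, where the classification becomes transparent. The cover $\pi\colon C_\alpha\to\widehat{\mathbb C}$, $\pi(x,y)=x$, is the cyclic $\mathbb Z/p$-cover of $\widehat{\mathbb C}$ branched over $\{0,1,\infty\}$ whose local monodromies at $0$ and at $1$ are $\alpha$ and $1$ in $\mathbb F_p$; since these monodromies must sum to $0$ (so that the cover closes up over $\infty$), the monodromy at $\infty$ equals $-(1+\alpha)$. Thus $C_\alpha$ is encoded by the triple $(\alpha,1,-(1+\alpha))\in\mathbb F_p^3$, whose entries are all nonzero precisely because $\alpha\in\{1,\dots,p-2\}$. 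The guiding principle is the standard fact that two such cyclic covers are isomorphic as Riemann surfaces \emph{respecting the $p$-gonal maps} exactly when their monodromy triples agree up to a permutation of the three branch points (realized by a Möbius transformation of $\widehat{\mathbb C}$ fixing $\{0,1,\infty\}$ setwise) together with a simultaneous rescaling by a unit $k\in\mathbb F_p^{*}$ (coming from an automorphism of $\mathbb Z/p$). The first task is then to check that this permutation-and-rescaling equivalence on triples $(\alpha,1,-(1+\alpha))$, after renormalizing the middle entry back to $1$, induces precisely the $\mathfrak S_3$-action of the previous subsection; a direct computation should show that the transposition exchanging $0$ and $1$ yields $V(\alpha)=\alpha^{-1}$ and that a $3$-cycle yields $U(\alpha)=-(1+\alpha)^{-1}$, so that $\mathcal O(\alpha)$ is exactly the set of indices producing $p$-gonally isomorphic curves.

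For the implication $\mathcal O(\alpha_1)=\mathcal O(\alpha_2)\Rightarrow C_{\alpha_1}\cong C_{\alpha_2}$, I would make the above isomorphisms explicit. Each of the six Möbius transformations permuting $\{0,1,\infty\}$ (for instance $x\mapsto 1-x$ and $x\mapsto 1/(1-x)$) transforms the defining equation of $C_\alpha$ into one of the shape $y^p=c\,x^{a}(x-1)^{b}$; composing with a substitution $y\mapsto y^{k}/(\text{a power of the right-hand side})$ that rescales the exponents by a unit $k$ so as to restore the exponent of $(x-1)$ to $1$, and absorbing the constant $c$ by a $p$-th root $y\mapsto c^{1/p}y$, produces an isomorphism $C_\alpha\cong C_\beta$ for each $\beta\in\mathcal O(\alpha)$. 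Carrying this out for the two generators $U$ and $V$ suffices, since they generate $\mathfrak S_3$.

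The reverse implication is where the real content lies, and it is the step I expect to be the main obstacle. An abstract isomorphism $\phi\colon C_{\alpha_1}\to C_{\alpha_2}$ of Riemann surfaces need not, a priori, carry the $p$-gonal structure of one curve to that of the other; to force this I would invoke Lefschetz's theorem quoted above, by which $\langle T\rangle$ is the unique subgroup of order $p$ in $\mathrm{Aut}(C_\alpha)$ for $p\ge 11$. Consequently $\phi\langle T_1\rangle\phi^{-1}$ is an order-$p$ subgroup of $\mathrm{Aut}(C_{\alpha_2})$, hence equals $\langle T_2\rangle$, so $\phi$ descends to an isomorphism $\bar\phi\colon\widehat{\mathbb C}\to\widehat{\mathbb C}$ of the quotients, i.e.\ a Möbius transformation. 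Since $\bar\phi$ must send the branch values of $\pi_1$ to those of $\pi_2$, it permutes $\{0,1,\infty\}$, and the relation $\phi T_1\phi^{-1}=T_2^{k}$ for some $k\in\mathbb F_p^{*}$ shows that $\bar\phi$ carries the monodromy triple of $C_{\alpha_1}$ to that of $C_{\alpha_2}$ up to the scaling $k$. By the equivalence established in the first paragraph this says exactly that $\mathcal O(\alpha_1)=\mathcal O(\alpha_2)$. The only gap is the range $p\in\{5,7\}$ excluded by Lefschetz's bound; there are finitely many curves involved and their small-genus automorphism groups can be analyzed directly to confirm that $\langle T\rangle$ remains distinguished, so that the argument extends to all $p\ge 5$.
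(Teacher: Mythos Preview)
Your approach is essentially the same as the paper's: encode each curve by the monodromy data of its cyclic $p$-gonal map, use the $\mathbb F_p^{*}$-rescaling to normalize, and show that any abstract isomorphism between two such curves must descend to a M\"obius transformation permuting $\{0,1,\infty\}$, whence the six-element orbit $\mathcal O(\alpha)$ appears. The explicit enumeration of the six M\"obius transformations and the resulting indices is exactly what the paper does.

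The one point worth flagging is your handling of the ``only if'' direction. You invoke Lefschetz's uniqueness of the order-$p$ subgroup, which as stated in the paper only covers $p\ge 11$, and then propose to dispatch $p\in\{5,7\}$ by hand. The paper instead cites the results of Gonz\'alez-Diez \cite{Gabino} (or Riera--Rodr\'{\i}guez \cite{R-R}) on prime Galois coverings of the sphere, which give directly---for \emph{every} prime $p$---that any isomorphism $\phi\colon C_{\alpha_1}\to C_{\alpha_2}$ fits into a commutative square over a M\"obius transformation $\Phi$ with $\pi\circ\phi=\Phi\circ\pi$. This is cleaner: it removes the need for the separate small-prime analysis and makes the argument uniform. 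Otherwise your sketch is correct and matches the paper's proof.
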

\begin{proof}
First, for $\alpha, \beta \in \{1,\ldots,p-1\}$ so that $\alpha + \beta \nequiv 0 \mod(p)$, we consider the algebraic curve
$$C_{\alpha,\beta}: y^{p}=x^{\alpha} (x-1)^{\beta}.$$

Let us consider the triangular Fuchsian group $\Delta_{p}=\langle x_{0}, x_{1}, x_{\infty}: x_{0}^{p}=x_{1}^{p}=x_{\infty}^{p}=x_{0} x_{1} x_{\infty}=1\rangle$.
The order $p$ cyclic branched cover $(x,y) \in C_{\alpha,\beta} \to x \in \widehat{\mathbb C}$ corresponds to the kernel of the monodromy
$$\Theta_{\alpha,\beta}:\Delta_{p} \to {\mathbb F}_{p}$$
$$x_{0} \mapsto \alpha^{-1}, \quad x_{1} \mapsto \beta^{-1}.$$

As the kernel of $\Theta_{\alpha,\beta}$ does not change if we post-compose with an automorphisms of ${\mathbb F}_{p}$ (which is given by ${\mathbb F}_{p}^{*}$), we may see that
if $\delta \in \mathbb{F}_{p}-\{0\}=\{1,\ldots,p-1\}$, then $C_{\delta\alpha, \delta\beta}$ and $C_{\alpha,\beta}$ are isomorphic. In particular, by taking $\delta=\beta^{-1}$, we have that $C_{\alpha,\beta}$ and $C_{\alpha\beta^{-1},1}$ are isomorphic.

As a consequence of results in \cite{Gabino} or in \cite{R-R}, the curves $C_{\alpha_{1}}$ and $C_{\alpha_{2}}$ are isomorphic if and only if there is an isomorphism $\phi:C_{\alpha_{1}} \to C_{\alpha_{2}}$ and a M\"obius transformation $\Phi:\widehat{\mathbb C} \to \widehat{\mathbb C}$ so that $\pi \circ \phi = \Phi \circ \pi$, where $\pi(x,y)=x$. In particular, $\Phi$ must keep invariant the set
$\{\infty,0,1\}$; so $\Phi \in  \mathfrak{S}_{3} \subset \rm{Aut}(\widehat{\mathbb C})$.

\begin{enumerate}
\item If $\Phi(x)=x$, then $\alpha_{2}=\alpha_{1}$.

\item If $\Phi(x)=1/x$, then $\phi(C_{\alpha_{1}})=C_{-(1+\alpha_{1})}$.

\item If $\Phi(x)=1-x$, then $\phi(C_{\alpha_{1}})=C_{1,\alpha_{1}} \cong C_{\alpha^{-1}}$.

\item If $\Phi(x)=x/(x-1)$, then $\phi(C_{\alpha_{1}})=C_{\alpha_{1},-(1+\alpha_{1})} \cong C_{-\alpha_{1}(1+\alpha_{1})^{-1}}$.

\item If $\Phi(x)=1/(1-x)$, then $\phi(C_{\alpha_{1}})=C_{1,-(1+\alpha_{1})} \cong C_{-(1+\alpha_{1})^{-1}}$.

\item If $\Phi(x)=(x-1)/x$, then $\phi(C_{\alpha_{1}})=C_{-(1+\alpha_{1}),\alpha_{1}} \cong C_{-\alpha_{1}^{-1}(1+\alpha_{1})}$.
\end{enumerate}
\end{proof}

\begin{rema}
\begin{enumerate}
\item The curve $C_{1}$ is hyperelliptic and the hyperelliptic involution is given by \\
$J(x,y)=(1-x,y)$. The Riemann surface $C_1$ can be also described by the hyperelliptic curve
$$w^{2}=u^{p}-1.$$

\item If $p \equiv 1 \mod(3)$, then the curve $C_{\gamma_{p}}$ admits a conformal automorphism of order $3$ given by
$$R(x,y)=(1/(1-x), (-1)^{\epsilon} x^{(\gamma_{p}^{2}+\gamma_{p}+1)/p}/y^{\gamma_{p}+1}),$$
where $\epsilon=1$ if $\gamma_{p}$ is even and $\epsilon=2$ otherwise.
We note that $R$ acts with exactly two fixed points and that the quotient $C_{\gamma_{p}}/\langle R \rangle$ has genus $(p-1)/6$.
\end{enumerate}
\end{rema}

\s

\section{Jacobian variety and Kani-Rosen decomposition theorem}
\subsection{Principally polarized abelian varieties}
A {\it principally polarized abelian variety} of dimension $g \geq 1$ is a pair $A=(T,Q)$, where $T={\mathbb C}^{g}/L$ is a complex torus of dimension $g$ and $Q$ (called a {\it principal polarization} of $A$) is a positive-definite Hermitian product in ${\mathbb C}^{g}$ whose imaginary part ${\rm Im}(Q)$ has integral values over elements of the lattice $L$, and such that there is a basis of $L$ for which ${\rm Im}(Q)$ is given by the matrix
$$\left( \begin{array}{cc}
0_{g} & I_{g}\\
-I_{g} & 0_{g}
\end{array}
\right) \ ,
$$
where $I_g$ denotes the $g \times g$ identity matrix and $0_g$ denotes the $g \times g$  zero matrix.

Two complex tori $T_{1}$ and $T_{2}$ are called {\it isogenous} if there is a
non-constant surjective morphism $h : T_{1} \to T_{2}$ with finite kernel; in this case $h$
is called an {\it isogeny} (or an \textit{isomorphism}  if the kernel is trivial). We will write $T_1 \sim T_2$.

A principally polarized abelian variety $A$ is called {\it decomposable} if it is isogenous to the product of complex tori of smaller dimensions (otherwise, it is said to be {\it simple}). It is called {\it completely decomposable} if it is isogenous to the product of elliptic curves (abelian varieties of dimension $1$).

A general result in this respect is the following.

\noindent
\begin{theo}[Poincar\'e's complete reducibility theorem \cite{Poincare}]
If $A$ is a principally polarized abelian variety, then there exist simple polarized abelian varieties $A_{1}, \ldots, A_{s}$ and positive integers $n_{1},\ldots,n_{s}$ such that $A$ is isogenous to the product $A_{1}^{n_{1}} \times \cdots A_{s}^{n_{s}}$. Moreover, the $A_{j}$ and $n_{j}$ are unique up to isogeny and permutation of the factors.
\end{theo}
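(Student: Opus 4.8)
The plan is to deduce the statement from the single most important special case, the \emph{Poincar\'e reducibility lemma}: if $B \subset A$ is an abelian subvariety, then there is a complementary abelian subvariety $C \subset A$ with $B \cap C$ finite and $B+C=A$, so that the addition map gives an isogeny $B \times C \to A$, i.e. $A \sim B \times C$, and the polarization $Q$ restricts to polarizations on both $B$ and $C$. I would prove this lemma lattice-theoretically. Writing $T = {\mathbb C}^{g}/L$ and letting $V_{B} \subset {\mathbb C}^{g}$ be the complex subspace with $L_{B} := L \cap V_{B}$ a full lattice in $V_{B}$, I would take $V_{C}$ to be the orthogonal complement of $V_{B}$ with respect to $Q$. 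Positive-definiteness of $Q$ forces $V_{B} \cap V_{C} = 0$ and ${\mathbb C}^{g} = V_{B} \oplus V_{C}$, while for the complex subspace $V_{B}$ the $Q$-orthogonal complement coincides with the ${\rm Im}(Q)$-orthogonal complement of $V_{B}$, which is cut out by the rational forms ${\rm Im}(Q)(\cdot,w)$, $w$ ranging over a basis of $L_{B}\otimes{\mathbb Q}$, and is therefore defined over ${\mathbb Q}$ relative to $L \otimes {\mathbb Q}$. Hence $L_{C} := L \cap V_{C}$ is a full lattice in $V_{C}$, and $C := V_{C}/L_{C}$ is the desired complement, with $Q$ restricting to a positive-definite form on each summand.

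For existence I would induct on $g = \dim A$. If $A$ is simple there is nothing to prove. Otherwise $A$ admits a nontrivial proper abelian subvariety $B$, and the lemma yields $A \sim B \times C$ with $\dim B, \dim C < g$; by the inductive hypothesis $B$ and $C$, each carrying the restricted polarization, decompose as products of simple polarized abelian varieties. Collecting together mutually isogenous simple factors produces a decomposition $A \sim A_{1}^{n_{1}} \times \cdots \times A_{s}^{n_{s}}$ with the $A_{i}$ simple and pairwise non-isogenous.

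For uniqueness I would pass to the endomorphism algebra ${\rm End}^{0}(A) := {\rm End}(A) \otimes {\mathbb Q}$. The key input is a Schur-type observation: any nonzero morphism out of a simple abelian variety is an isogeny, so that ${\rm End}^{0}(A_{i})$ is a division algebra and ${\rm Hom}^{0}(A_{i}, A_{j}) = 0$ whenever $A_{i}$ and $A_{j}$ are non-isogenous simple varieties. Consequently, for each $i$ the multiplicity $n_{i}$ is recovered as the dimension of ${\rm Hom}^{0}(A_{i}, A)$ as a vector space over the division algebra $D_{i} = {\rm End}^{0}(A_{i})$, a quantity manifestly invariant under isogeny, while the isogeny class of each $A_{i}$ that occurs is likewise intrinsic to $A$. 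This forces any two such decompositions to agree up to isogeny of the factors and permutation.

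The principal obstacle is the complement lemma of the first paragraph: it is precisely here that the hypothesis of a polarization is indispensable. For a general complex torus with a proper subtorus no complement need exist, and what rescues the situation is the positive-definiteness of the Hermitian form $Q$ together with the integrality of ${\rm Im}(Q)$ on $L$, which conspire to make the orthogonal complement a rational, hence lattice-compatible, subspace. Once this lemma is in hand, both the induction and the endomorphism-algebra bookkeeping are formal.
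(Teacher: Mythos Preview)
The paper does not prove this theorem at all: it is stated as a classical result with a citation to \cite{Poincare} and then used as background for the Kani--Rosen machinery. So there is no ``paper's own proof'' to compare against.

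That said, your sketch is a correct outline of the standard argument. The complement lemma is handled properly: for a \emph{complex} subspace $V_{B}$ the $Q$-orthogonal and ${\rm Im}(Q)$-orthogonal complements coincide, and integrality of ${\rm Im}(Q)$ on $L$ makes the complement rational, hence a subtorus. The induction for existence is routine, and the uniqueness via ${\rm End}^{0}$ and Schur's lemma is the usual route; your identification of $n_{i}$ as the $D_{i}$-rank of ${\rm Hom}^{0}(A_{i},A)$ is exactly right. One small point of terminology: ${\rm Hom}^{0}(A_{i},A)$ is a module over the division ring $D_{i}$ rather than a vector space, but over a division ring every module is free of well-defined rank, so the conclusion stands.
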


\s

\subsection{The Jacobian variety}
Let $S$ be a closed Riemann surface of genus $g \geq 1$. Its first homology group $H_{1}(S,{\mathbb Z})$ is isomorphic, as a ${\mathbb Z}$-module, to ${\mathbb Z}^{2g}$ and the complex vector space $H^{1,0}(S)$ of its holomorphic $1$-forms is isomorphic to ${\mathbb C}^{g}$. There is a natural injective map
\begin{align*}
\iota : H_{1}(S,{\mathbb Z}) & \hookrightarrow \left( H^{1,0}(S) \right)^{*} \quad \mbox{(the dual vector space of $H^{1,0}(S)$)} \\
\alpha & \mapsto \int_{\alpha}
\end{align*}

The image $\iota(H_{1}(S,{\mathbb Z}))$ is a lattice in $\left( H^{1,0}(S) \right)^{*}$ and the quotient $g$-dimensional torus
$$JS=\left( H^{1,0}(S) \right)^{*}/\iota(H_{1}(S,{\mathbb Z}))$$
is called the \textit{Jacobian variety} of $S$.  The geometric intersection product on $H_{1}(S,{\mathbb Z})$ induces a principal polarization on $JS$.

If we fix a point $p_{0} \in S$, then there is a natural holomorphic embedding $$\rho_{p_{0}}:S \to JS$$
defined by $\rho_{p_{0}}(p)=\int_{\alpha}$, where $\alpha$ is an arc in $S$ connecting $p_{0}$ to $p$.

If we choose a symplectic homology basis for $S$, say $\{\alpha_{1},\ldots,\alpha_{g},\beta_{1},\ldots,\beta_{g}\}$ (that is, a basis for $H_{1}(S,{\mathbb Z})$ such that the intersection products $\alpha_{i} \cdot \alpha_{j}=\beta_{i} \cdot \beta_{j}=0$ and $\alpha_{i} \cdot \beta_{j}=\delta_{ij}$, where $\delta_{ij}$ is the Kronecker delta function), we may find a dual basis $\{\omega_{1},\ldots,\omega_{g}\}$ (i.e. a basis of $H^{1,0}(S)$ such that $\int_{\alpha_{i}} \omega_{j}=\delta_{ij}$).

If we now consider the Riemann period matrix $\Omega=(I_g \; Z)_{g \times 2g}$, where  the Riemann matrix $Z$ is given by
$$Z=\left( \int_{\beta_{j}} \omega_{i} \right)_{g \times g} \in {\mathfrak H}_{g},$$
then the $2g$ columns of $\Omega$ generate a lattice in ${\mathbb C}^{g}$. The quotient torus ${\mathbb C}^{g}/\Omega$ is isomorphic to $JS$; both have the same
principal polarization.

\subsection{Kani-Rosen's decomposition theorem}
As a consequence of Poincar\'e complete reducibility theorem, the Jacobian variety of a closed Riemann surface can be decomposed, up to isogeny, into a product of simple sub-varieties. To obtain such a decomposition is not an easy job, but there are general results which permit to work in that direction (to obtain a decomposition into smaller dimension sub-varieties).

The following result, due to Kani and Rosen \cite{K-R}, provides sufficient conditions for the Jacobian variety of a closed Riemann surface to decompose into the product of the Jacobian varieties of suitable quotient Riemann surfaces. If $K<{\rm Aut}(S)$, we denote by $g_{K}$ the genus of the quotient orbifold $S/K$ and by $S_{K}$ the underlying Riemann surface structure of the orbifold $S/K$.

\noindent
\begin{theo}[Kani-Rosen's decomposition theorem \cite{K-R}]
Let $S$ be a closed Riemann surface of genus $g \geq 1$ and let $H_{1},\ldots,H_{r}\leq {\rm Aut}(S)$ such that:
\begin{enumerate}
\item $H_{i} H_{j}=H_{j} H_{i}$, for all $i,j =1,\ldots,r$;
\item there are non-zero integers  $n_{1},\ldots, n_{r}$ satisfying
\begin{enumerate}
\item $\sum_{i,j=1}^{r} n_{i}n_{j} g_{H_{i}H_{j}}=0$, and
\item for every $i=1,\ldots,r$, it also holds that $\sum_{j=1}^{r} n_{j} g_{H_{i}H_{j}}=0$.
\end{enumerate}
\end{enumerate}

Then
$$\prod_{n_{i}>0} \left( JS_{H_{i}} \right)^{n_{i}} \sim  \prod_{n_{j}<0} \left( JS_{H_{j}} \right)^{-n_{j}}.$$
\end{theo}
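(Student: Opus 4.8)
The plan is to reduce the statement to a single identity among idempotents in the rational endomorphism algebra ${\rm End}^{0}(JS):={\rm End}(JS)\otimes_{\mathbb Z}{\mathbb Q}$, and then to apply the structure theory of this (semisimple) algebra. For each $i$ set $\epsilon_{i}=\frac{1}{|H_{i}|}\sum_{h\in H_{i}}h$; since ${\rm Aut}(S)$ acts on $JS$, each $\epsilon_{i}$ is an idempotent of ${\rm End}^{0}(JS)$, acting on $H^{1,0}(S)$ as the orthogonal projection onto the subspace $H^{1,0}(S)^{H_{i}}$ of $H_{i}$-invariant forms. Two facts will be used repeatedly. First, because condition (1) makes $H_{i}H_{j}$ a subgroup, a direct count in the group algebra gives $\epsilon_{i}\epsilon_{j}=\frac{1}{|H_{i}H_{j}|}\sum_{h\in H_{i}H_{j}}h=\epsilon_{H_{i}H_{j}}$, so these idempotents commute and their products are again of the same shape. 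Second, the quotient map $S\to S_{H}$ identifies $H^{1,0}(S_{H})$ with $H^{1,0}(S)^{H}$, whence ${\rm Im}(\epsilon_{H})\sim JS_{H}$ and, $\epsilon_{H}$ being idempotent, $\dim {\rm Im}(\epsilon_{H})={\rm tr}\big(\epsilon_{H}\,|\,H^{1,0}(S)\big)=g_{H}$.

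First I would translate the numerical hypotheses into operator identities on $V:=H^{1,0}(S)$. By the facts above, $g_{H_{i}H_{j}}={\rm tr}(\epsilon_{i}\epsilon_{j}\,|\,V)$. Put $\Phi=\sum_{i=1}^{r}n_{i}\epsilon_{i}\in{\rm End}^{0}(JS)$; as a $\mathbb Q$-linear combination of orthogonal projections it acts on $V$ as a Hermitian operator. Condition (2)(b) reads ${\rm tr}(\epsilon_{i}\Phi\,|\,V)=\sum_{j}n_{j}g_{H_{i}H_{j}}=0$ for each $i$ (multiplying by $n_{i}$ and summing in fact recovers (2)(a)), while condition (2)(a) reads ${\rm tr}(\Phi^{2}\,|\,V)=\sum_{i,j}n_{i}n_{j}g_{H_{i}H_{j}}=0$. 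Since $\Phi$ is Hermitian, ${\rm tr}(\Phi^{2}\,|\,V)$ equals the square of its Hilbert--Schmidt norm, so it vanishes precisely when $\Phi=0$ on $V$; because $\Phi$ is defined over $\mathbb Q$ (it preserves $H_{1}(S,{\mathbb Q})$) and $H^{1}(S,{\mathbb C})=V\oplus\overline V$, vanishing on $V$ forces $\Phi=0$ in ${\rm End}^{0}(JS)$. Thus the hypotheses yield the idempotent relation
$$\sum_{n_{i}>0}n_{i}\,\epsilon_{i}=\sum_{n_{j}<0}(-n_{j})\,\epsilon_{j}\quad\text{in }{\rm End}^{0}(JS).$$

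It then remains to pass from this equality to the desired isogeny by means of an elementary lemma on the semisimple algebra ${\rm End}^{0}(JS)$: if two sums of idempotents, counted with positive integer multiplicities, coincide, then the corresponding products of images are isogenous. To justify it I would write ${\rm End}^{0}(JS)\cong\prod_{k}M_{d_{k}}(D_{k})$ using Poincar\'e reducibility and Wedderburn's theorem, and note that the isogeny class of ${\rm Im}(\epsilon)$ is determined by the reduced ranks of the components of $\epsilon$; these ranks are computed by the reduced trace, which is $\mathbb Q$-linear, so equal sums of idempotents give equal total reduced ranks in each simple factor, hence isogenous products. Applying this to the displayed relation and replacing each ${\rm Im}(\epsilon_{i})$ by the isogenous $JS_{H_{i}}$ yields $\prod_{n_{i}>0}(JS_{H_{i}})^{n_{i}}\sim\prod_{n_{j}<0}(JS_{H_{j}})^{-n_{j}}$, as claimed.

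The group-algebra computation $\epsilon_{i}\epsilon_{j}=\epsilon_{H_{i}H_{j}}$ and the identification ${\rm Im}(\epsilon_{H})\sim JS_{H}$ are routine. The two steps carrying the real content are the passage from the numerical conditions to $\Phi=0$, whose crux is recognizing $g_{H_{i}H_{j}}$ as ${\rm tr}(\epsilon_{i}\epsilon_{j}\,|\,V)$ and invoking the positivity of the Hermitian form on $H^{1,0}(S)$, and the idempotent lemma, where the Wedderburn decomposition and the additivity of the reduced trace are genuinely needed; I expect this last step to be the main obstacle.
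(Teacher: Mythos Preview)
The paper does not give its own proof of this theorem: it is quoted from \cite{K-R} and used only as a black box (via Corollary~\ref{coroKR}) in the application to Fermat curves, so there is nothing in the paper to compare your argument against.

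That said, your proposal is correct and is essentially the original Kani--Rosen argument. The key observations---that $\epsilon_{i}\epsilon_{j}=\epsilon_{H_{i}H_{j}}$ under hypothesis~(1), that $g_{H}={\rm tr}(\epsilon_{H}\mid H^{1,0}(S))$, and that the numerical hypotheses force $\Phi=\sum n_{i}\epsilon_{i}$ to vanish in ${\rm End}^{0}(JS)$---are exactly how Kani and Rosen convert the genus conditions into an idempotent relation. Your justification of $\Phi=0$ via positivity of the Hilbert--Schmidt norm on $H^{1,0}(S)$ (using that each $\epsilon_{i}$ is self-adjoint for the Hodge metric, since ${\rm Aut}(S)$ acts unitarily) is clean; one small remark is that condition~(2)(b) already implies (2)(a), as you note, and in fact (2)(b) alone suffices for the conclusion. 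The final ``idempotent lemma'' is likewise the standard passage: in each Wedderburn factor $M_{d_{k}}(D_{k})$ the matrix trace of an idempotent equals its $D_{k}$-rank, so equal $\mathbb{Q}$-linear combinations of idempotents have equal total ranks in every simple factor, forcing the claimed isogeny of products.
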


\s

If in the above theorem we set $H_{r}=\{1\}$, $n_{1}=\cdots=n_{r-1}=-1$ and $n_{r}=1$, then we have the following consequence, that we will use for our  Fermat curves.

\s
\noindent
\begin{coro}[\cite{K-R}]\label{coroKR}
Let $S$ be a closed Riemann surface of genus $g \geq 1$ and let $H_{1},\ldots,H_{s}<{\rm Aut}(S)$ be such that:
\begin{enumerate}
\item $H_{i} H_{j}=H_{j} H_{i}$, for all $i,j =1,\ldots,s$;

\medskip

\item $g_{H_{i}H_{j}}=0$, for $1 \leq i < j \leq s$;

\item $\displaystyle g=\sum_{j=1}^{s} g_{H_{j}}$.
\end{enumerate}

Then
$$JS \sim  \prod_{j=1}^{s} JS_{H_{j}}.$$
\end{coro}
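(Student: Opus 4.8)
The plan is to apply the Kani--Rosen decomposition theorem directly, with the index $r$ set to $s+1$ and the extra group chosen to be trivial. First I would set $H_{s+1}=\{1\}$ and assign the weights $n_1=\cdots=n_s=-1$ and $n_{s+1}=1$, exactly as indicated just before the statement. With this choice the entire argument reduces to checking that the three hypotheses of the theorem hold, and then reading off the two sides of the resulting isogeny.

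Before verifying the hypotheses I would record three elementary observations. Since each $H_i$ is a group, $H_iH_i=H_i$, so $g_{H_iH_i}=g_{H_i}$. Since $H_{s+1}=\{1\}$, one has $H_iH_{s+1}=H_i=H_{s+1}H_i$, whence $g_{H_iH_{s+1}}=g_{H_i}$, and in particular $g_{H_{s+1}}=g_{\{1\}}=g$ because $S/\{1\}=S$. Note also that the commutativity of the $H_i$ (hypothesis (1) of the corollary) is precisely what guarantees that each product $H_iH_j$ is again a subgroup, so that the orbifold $S/(H_iH_j)$ and its genus $g_{H_iH_j}$ are well defined.

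Next I would check the hypotheses of Kani--Rosen. Condition (1) (commutativity for all pairs among $H_1,\ldots,H_{s+1}$) holds because pairs with both indices $\le s$ commute by the corollary's hypothesis (1), and any pair involving the trivial group $H_{s+1}$ commutes trivially. For condition (2b), fix $i\le s$ and compute
\begin{align*}
\sum_{j=1}^{s+1} n_j\,g_{H_iH_j}
&= -\sum_{j=1}^{s} g_{H_iH_j} + g_{H_i} \\
&= -\Big(g_{H_iH_i}+\sum_{\substack{j\le s\\ j\ne i}} g_{H_iH_j}\Big)+g_{H_i} \\
&= -\big(g_{H_i}+0\big)+g_{H_i}=0,
\end{align*}
where the off-diagonal terms vanish by hypothesis (2), using the symmetry $g_{H_iH_j}=g_{H_jH_i}$. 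For $i=s+1$ the same sum equals $-\sum_{j=1}^{s} g_{H_j}+g=0$ by hypothesis (3). Finally, condition (2a) is automatic once (2b) holds, since $\sum_{i,j} n_in_j\,g_{H_iH_j}=\sum_i n_i\big(\sum_j n_j\,g_{H_iH_j}\big)=\sum_i n_i\cdot 0=0$.

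It remains to read off the conclusion. The only positive weight is $n_{s+1}=1$, so the left-hand side of the Kani--Rosen isogeny is $(JS_{H_{s+1}})^{1}=JS_{\{1\}}=JS$, while the negative weights $n_1=\cdots=n_s=-1$ give the right-hand side $\prod_{j=1}^{s}(JS_{H_j})^{1}=\prod_{j=1}^{s}JS_{H_j}$. This yields $JS\sim\prod_{j=1}^{s}JS_{H_j}$, as claimed. Since every step is a direct substitution into an already-established theorem, there is no genuine obstacle here; the only point requiring a little care is the bookkeeping that separates the diagonal, off-diagonal, and trivial-group contributions in the sum (2b), together with the observation that (2a) need not be verified independently.
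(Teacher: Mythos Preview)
Your proof is correct and follows exactly the approach the paper itself indicates: set $H_{s+1}=\{1\}$ with weights $n_1=\cdots=n_s=-1$, $n_{s+1}=1$, and then verify the hypotheses of the Kani--Rosen theorem. The paper merely states this substitution without writing out the verifications, so your version is a faithful (and more detailed) rendering of the same argument.
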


\s

\section{A decomposition of the Jacobian variety of Fermat curve of prime degree}
Let $p \geq 5$ be a prime integer and let $F_{p}: \{ x^{p}+y^{p}+z^{p}=0\} \subset {\mathbb P}_{\mathbb C}^{2}$ be the classical Fermat curve of degree $p$. It is a well known fact that ${\rm Aut}(F_{p}) \cong {\mathbb Z}_{p}^{2} \rtimes D_{3}$, where
$D_{3}$ is the dihedral group of order six generated by $u([x:y:z])=[z:x:y]$ and $v([x:y:z])=[y:x:z]$, and  ${\mathbb Z}_{p}^{2}$ is generated by $a_{1}([x:y:z])=[\omega_{p} x:y:z]$ and $a_{2}([x:y:z])=[x:\omega_{p} y:z]$, where $\omega_{p}=e^{2 \pi i/p}$. Define $a_3$ by $a_{1}a_{2}a_{3}=1$.

Inside ${\rm Aut}(F_{p})$ we consider the abelian group $H=\langle a_{1},a_{2} \rangle \cong {\mathbb Z}_{p}^{2}$. The only elements in $H$ different from the identity acting with fixed points on $F_p$ are $a_{1}^{k}, a_{2}^{k}, a_{3}^{k}$, where $k=1,\ldots,p-1$. The map
$$\pi:F_{p} \to \widehat{\mathbb C}: [x:y:z] \mapsto -(y/x)^{p}$$
is a regular branched cover with deck group $H$ and branch set  $\{ \infty , 0 ,1 \}$. In fact, the image under $\pi$ of the set of fixed points of $a_1$ is $\infty$, the image of the fixed points of $a_2$ is $0$ and the image of the fixed points of $a_1 a_2$ is $1$.

The branched covering $\pi$ is a highest abelian branched covering of the Riemann sphere branched at $\infty$, $0$ and $1$ with orders equal to $p$ (this comes from the fact that $F_{p}$ is uniformized by the derived subgroup of the triangular group $\langle z_1, z_2, z_3: z_{1}^{p}=z_{2}^{p}=z_{3}^{p}=z_{1}z_{2}z_{3}=1\rangle$). As we have an order $p$ cyclic branched covering $\pi_{\alpha}:C_{\alpha} \to \widehat{\mathbb C}$, with branch values exactly the above ones, there must be a subgroup $K_{\alpha}$ of $H$, acting freely, of order $p$ (so a cyclic group) so that $F_{p}/K_{\alpha}$ corresponds to $C_{\alpha}$ and the cyclic cover $\pi_{\alpha}$ is defined by the cyclic group $H/K_{\alpha}$. Now, in $H$ there are exactly $(p-2)$ subgroups isomorphic to ${\mathbb Z}_{p}$ acting freely on $F_{p}$; these being
$$H_{j}=\langle a_{1}a_{2}^{1+j}\rangle, \quad j=1,\ldots,p-2.$$

The Riemann surface $R_{j}=F_{p}/H_{j}$ has genus $(p-1)/2$ and it admits a cyclic group of order $p$, generated by an automorphism $\tau$ induced by $a_{2}$ and quotient being the Riemann sphere, with corresponding cover branched over $\infty$, $0$ and $1$.
The condition $a_{1}a_{2}^{1+j} \in H_{j}$ asserts that $a_{1}$ induces the automorphism $\tau^{-(1+j)}$. This information ensures that $R_{j}$ corresponds to the curve $y^{p}=x^{-(1+j)}(x-1)$. In this way, for each $\alpha \in \{1,\ldots,p-2\}$, $C_{\alpha}=F_{p}/H_{p-1-\alpha}$. Note that $C_{\gamma_{p}}=F_{p}/H_{\gamma_{p}^{-1}}$ (since $\gamma_{p}^{-1}=p-1-\gamma_{p}$ in $\mathbb{F}_{p}$).

Now, since (i) $H_{i}H_{j}=H_{j}H_{i}$, (ii) $F_{p}/H_{i}H_{j}$ has genus zero for $i \neq j$ and (iii) the sum of the genera of all $(p-2)$ surfaces $F_{p}/H_{j}$ equal the genus of $F_{p}$, Corollary \ref{coroKR} implies the following result.

\s
\noindent
\begin{theo}\label{teo1}
Let $p \geq 5$ be a prime integer.
\begin{enumerate}
\item If $p \equiv 1 \mod(3)$, let $\alpha_{1},\ldots,\alpha_{(p-7)/6}$ be a maximal collection in $X_{p}$ of points with different orbits of cardinality six, and let $\gamma_{p} \in X_{p}$ denote a solution of $\gamma_{p}^{2}+\gamma_{p}+1=0$ in $\mathbb{F}_{p}$. Then
$$JF_{p} \sim JC_{1}^{3} \times JC_{\gamma_{p}}^{2} \times JC_{\alpha_{1}}^{6} \times \cdots \times JC_{\alpha_{(p-7)/6}}^{6}.$$

\item If $p \equiv 2 \mod(3)$, let $\alpha_{1},\ldots,\alpha_{(p-5)/6}$ be a maximal collection in $X_{p}$ of points with different orbits of cardinality six. Then
$$JF_{p} \sim JC_{1}^{3} \times JC_{\alpha_{1}}^{6} \times \cdots \times JC_{\alpha_{(p-5)/6}}^{6}.$$

\end{enumerate}
\end{theo}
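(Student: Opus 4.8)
The plan is to apply Corollary \ref{coroKR} to the family of subgroups $H_{1},\ldots,H_{p-2}$ of ${\rm Aut}(F_{p})$ and then to repackage the resulting product of Jacobians according to the $\mathfrak{S}_{3}$-orbit structure on $X_{p}$ established in Section 2.1. First I would verify the three hypotheses of the corollary for $S=F_{p}$ and these subgroups, which is exactly what the discussion preceding the statement records: (i) the $H_{j}$ pairwise commute since they all lie in the abelian group $H\cong{\mathbb Z}_{p}^{2}$; (ii) for $i\neq j$ two distinct order-$p$ subgroups of $H$ generate all of $H$, so $F_{p}/H_{i}H_{j}=F_{p}/H=\widehat{\mathbb C}$ has genus zero; and (iii) each $F_{p}/H_{j}$ has genus $(p-1)/2$, whence $\sum_{j=1}^{p-2} g_{H_{j}}=(p-2)(p-1)/2$ equals the genus of $F_{p}$. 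The corollary then gives
$$JF_{p} \sim \prod_{j=1}^{p-2} J(F_{p}/H_{j}).$$

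Next I would rewrite each factor in terms of the curves $C_{\alpha}$. The identification $C_{\alpha}=F_{p}/H_{p-1-\alpha}$ obtained above allows the index $j$ to be replaced by $\alpha=p-1-j$, so that the product becomes $\prod_{\alpha=1}^{p-2} JC_{\alpha}$ with $\alpha$ ranging over all of $X_{p}$. Since isomorphic Riemann surfaces have isomorphic, hence isogenous, Jacobians, the Lemma shows that $JC_{\alpha}$ depends only on the orbit ${\mathcal O}(\alpha)$; consequently an orbit of cardinality $m$ contributes the factor $(JC_{\alpha})^{m}$ for any chosen representative $\alpha$.

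The combinatorial heart of the argument is then to collect these factors using the orbit census of Section 2.1. The orbit ${\mathcal O}(1)=\{1,p-2,(p-1)/2\}$ of size $3$ always contributes $JC_{1}^{3}$; when $p\equiv 1\bmod 3$ the orbit ${\mathcal O}(\gamma_{p})=\{\gamma_{p},\gamma_{p}^{-1}\}$ of size $2$ contributes $JC_{\gamma_{p}}^{2}$; and every remaining orbit has cardinality $6$ and contributes a factor $JC_{\alpha_{i}}^{6}$. Counting the leftover points — there are $p-7$ of them when $p\equiv 1\bmod 3$ and $p-5$ when $p\equiv 2\bmod 3$ — yields exactly $(p-7)/6$, respectively $(p-5)/6$, sextic orbits; these are integers because $p\equiv 1$, respectively $p\equiv 5$, modulo $6$. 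Substituting and grouping gives the two displayed decompositions.

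The computational steps are all routine; the only point that demands care is the orbit bookkeeping in the last step, namely confirming that the exceptional orbits of sizes $2$ and $3$ are the only non-generic ones, that the residual point counts are divisible by $6$, and that a maximal set of representatives $\alpha_{1},\ldots,\alpha_{(p-7)/6}$ (respectively $\alpha_{1},\ldots,\alpha_{(p-5)/6}$) of the size-six orbits is well defined. This is the main, though mild, obstacle, and it is resolved entirely by the orbit analysis already carried out in Section 2.1.
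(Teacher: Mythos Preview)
Your proposal is correct and follows essentially the same route as the paper: apply Corollary~\ref{coroKR} to the subgroups $H_{1},\ldots,H_{p-2}$ of the abelian group $H\cong{\mathbb Z}_{p}^{2}$, identify each quotient with some $C_{\alpha}$, and then collapse the product $\prod_{\alpha\in X_{p}} JC_{\alpha}$ along $\mathfrak{S}_{3}$-orbits via the Lemma. The paper compresses all of this into the single sentence preceding the theorem, so your write-up simply makes explicit the orbit bookkeeping that the paper leaves to the reader.
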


\s

\subsection{The Jacobian variety of $C_{\gamma_{p}}$}
In the case $p \equiv 1 \mod(3)$, we have two solutions $\gamma_{p}$ in $\{1,\ldots,p-2\}$ to the quadratic equation $\gamma_{p}^{2}+\gamma_{p}+1 \equiv 0 \mod(p)$ (these are inverses of each other in $\mathbb{F}_{p}$).

Besides the order $p$ automorphism $T(x,y)=(x, \omega_{p} y)$, the
curve $C_{\gamma_{p}}$ also admits the order $3$ automorphism
$$R(x,y)=(1/(1-x), (-1)^{\epsilon} x^{(\gamma_{p}^{2}+\gamma_{p}+1)/p}/y^{\gamma_{p}+1}),$$
(where $\epsilon=1$ if $\gamma_{p}$ is even and $\epsilon=2$ otherwise). The automorphism $R$ acts with exactly two fixed points and the quotient $C_{\gamma}/\langle R \rangle$ has genus $(p-1)/6$.

It can be seen that $R \circ T=T^{\gamma_{p}^{2}} \circ R$. In particular, $\langle T \rangle$ is a normal subgroup of $\langle T,R \rangle$ and we have that $\langle T,R\rangle=\langle T \rangle \rtimes \langle R \rangle \cong {\mathbb Z}_{p} \rtimes {\mathbb Z}_{3}$. We also have the equality  $T^{-l} \circ R \circ T^{l}=T^{l(\gamma_{p}^{2}-1)} \circ R$.

Let us consider the subgroups of $\textup{Aut}(C_{\gamma_{p}})$ given as follows.
$$K_{1}=\langle R \rangle=\{I, R, R^{2}\}, $$
$$K_{2}=\langle T^{\gamma_{p}^{2}-1} \circ R \rangle=\{I,T^{\gamma_{p}^{2}-1} \circ R, T^{\gamma_{p}^{4}-1} \circ R^{2} \} = T^{-1} \, K_1 \, T,$$
$$K_{3}=\langle T^{2(\gamma_{p}^{2}-1)} \circ R \rangle=\{I,T^{2(\gamma_{p}^{2}-1)} \circ R, T^{2(\gamma_{p}^{4}-1)} \circ R^{2} \} = T^{-2} \, K_1 \, T^2 .$$

Thus we  see that $K_{i}K_{j}=K_{j}K_{i}$ (this follows from the fact that $\gamma_{p}^{3} \equiv 1 \mod(p)$). As $C_{\gamma_{p}}/K_{i}$ has genus $(p-1)/6$ and $g_{C_{\gamma_{p}}/K_iK_j}=0$  for $i \neq j$, we may apply Corollary \ref{coroKR} to conclude that
$$JC_{\gamma_{p}} \sim JE_{\gamma_{p}}^{3},$$
where $E_{\gamma_{p}}=C_{\gamma_{p}}/K_{1}$. In this way, Theorem \ref{teo1} can be written as follows.

\s
\noindent
\begin{theo}\label{teo2}
Let $p \geq 5$ be a prime integer.
\begin{enumerate}
\item If $p \equiv 1 \mod(3)$, let $\alpha_{1},\ldots,\alpha_{(p-7)/6}$ be a maximal collection in $X_{p}$ of points with different orbits and each one of cardinality $6$ and let $\gamma_{p} \in X_{p}$ solution of $\gamma_{p}^{2}+\gamma_{p}+1=0$ in $\mathbb{F}_{p}$. Then
$$JF_{p} \sim JC_{1}^{3} \times JE_{\gamma_{p}}^{6} \times JC_{\alpha_{1}}^{6} \times \cdots \times JC_{\alpha_{(p-7)/6}}^{6}.$$

\item If $p \equiv 2 \mod(3)$, let
    $\alpha_{1},\ldots,\alpha_{(p-5)/6}$ be a maximal
    collection in $X_{p}$ of points with different orbits
    and each one of cardinality $6$. Then
$$JF_{p} \sim JC_{1}^{3} \times JC_{\alpha_{1}}^{6} \times \cdots \times JC_{\alpha_{(p-5)/6}}^{6}.$$

\end{enumerate}
\end{theo}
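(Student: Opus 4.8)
The plan is to deduce Theorem~\ref{teo2} directly from Theorem~\ref{teo1} by inserting the refined decomposition of $JC_{\gamma_p}$ obtained above in the present subsection. Since Corollary~\ref{coroKR}, applied to the three order-$3$ subgroups $K_1, K_2, K_3$, already yields
$$JC_{\gamma_p} \sim JE_{\gamma_p}^{3},$$
with $E_{\gamma_p}=C_{\gamma_p}/K_1$, the remaining task is essentially bookkeeping: I would substitute this factorization into each of the two decompositions furnished by Theorem~\ref{teo1}, using that isogeny is an equivalence relation compatible with forming finite products and powers of abelian varieties.

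For the case $p \equiv 2 \mod(3)$, the statement of Theorem~\ref{teo2}(2) is literally identical to Theorem~\ref{teo1}(2): no factor $JC_{\gamma_p}$ occurs, since $\gamma_p^2+\gamma_p+1 \equiv 0$ has no solution in $\mathbb{F}_p$ when $p \equiv 2 \mod(3)$. Hence in this case there is nothing to prove beyond quoting Theorem~\ref{teo1}(2).

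For the case $p \equiv 1 \mod(3)$, Theorem~\ref{teo1}(1) gives
$$JF_p \sim JC_1^{3} \times JC_{\gamma_p}^{2} \times JC_{\alpha_1}^{6} \times \cdots \times JC_{\alpha_{(p-7)/6}}^{6},$$
and replacing the middle factor via $JC_{\gamma_p} \sim JE_{\gamma_p}^{3}$ produces $JC_{\gamma_p}^{2} \sim JE_{\gamma_p}^{6}$, whence the asserted decomposition follows at once. I would flag explicitly here only that isogeny, being preserved under finite products, makes this substitution inside the product legitimate.

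The only genuine content lies in the isogeny $JC_{\gamma_p} \sim JE_{\gamma_p}^{3}$ established above, and the point there that most deserves attention is that the three quotients $C_{\gamma_p}/K_1$, $C_{\gamma_p}/K_2$, $C_{\gamma_p}/K_3$ are \emph{all} isomorphic to $E_{\gamma_p}$: because $K_2 = T^{-1}K_1 T$ and $K_3 = T^{-2}K_1 T^{2}$ are $\langle T\rangle$-conjugates of $K_1$, the automorphism $T$ of $C_{\gamma_p}$ descends to isomorphisms $C_{\gamma_p}/K_1 \cong C_{\gamma_p}/K_2 \cong C_{\gamma_p}/K_3$, which is precisely what collapses the product $\prod_j JS_{K_j}$ of Corollary~\ref{coroKR} into the single power $JE_{\gamma_p}^{3}$. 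Once this identification is combined with the hypotheses of Corollary~\ref{coroKR} verified above, namely the commutativity of the $K_i$ (coming from $\gamma_p^{3}\equiv 1 \mod(p)$), the vanishing of the intermediate genera $g_{C_{\gamma_p}/K_iK_j}$ for $i\neq j$, and the genus balance $3\cdot (p-1)/6 = (p-1)/2$, the theorem follows immediately by the substitution described above.
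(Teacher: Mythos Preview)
Your proposal is correct and follows exactly the paper's approach: Theorem~\ref{teo2} is obtained from Theorem~\ref{teo1} by substituting the isogeny $JC_{\gamma_p}\sim JE_{\gamma_p}^{3}$ (derived via Corollary~\ref{coroKR} applied to $K_1,K_2,K_3$) into the factor $JC_{\gamma_p}^{2}$ when $p\equiv 1\pmod 3$, with nothing to do when $p\equiv 2\pmod 3$. Your explicit remark that the three quotients $C_{\gamma_p}/K_i$ are isomorphic because the $K_i$ are $\langle T\rangle$-conjugate is a helpful clarification that the paper leaves implicit.
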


\s

\section{Comparing the three isogenous decompositions of $JF_p$}

As we mentioned in the introduction, two decompositions of the Jacobians of the Fermat curves $F_p$ were known: those given in \cite{A} and in \cite{B-R}; in the previous section we have given a new decomposition.

 We now prove that all three decompositions coincide for the case $p$ prime.

\begin{theo}
a) The decomposition for $JF_p$ given in Theorem \ref{teo2} is
the group algebra decomposition of \eqref{dec1}.
In other words, the factors in the decomposition of $JF_p$
given by Theorem \ref{teo2} are isogenous to the factors in the
decomposition \eqref{dec1}, respectively as follows.

\begin{align*}
  B_0 & \sim JC_1 \\
  B & \sim JE_{\gamma_p} \\
  B_j & \sim JC_{\alpha_j} \; , \  \text{ for } 1 \leq  j \leq N,
\end{align*}
where
$$
N = \left\{
      \begin{array}{ll}
        (p-7)/6, & \hbox{if $p \equiv 1 \mod(3)$;} \\
        (p-5)/6, & \hbox{if $p \equiv 2 \mod(3)$.}
      \end{array}
    \right.
$$

b) The decomposition for $JF_p$ given in Theorem \ref{teo1} is
the decomposition given in \cite{A}.
\end{theo}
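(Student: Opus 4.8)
The unifying tool will be the theory of abelian varieties with complex multiplication by $\mathbb{Q}(\zeta_{p})$, together with the uniqueness part of Poincar\'e's theorem. The starting point is that the order-$p$ automorphism $T$ makes $H_{1}(C_{\alpha},\mathbb{Q})$ into a one-dimensional $\mathbb{Q}(\zeta_{p})$-vector space: its $\mathbb{Q}$-dimension is $p-1=[\mathbb{Q}(\zeta_{p}):\mathbb{Q}]$, and the trivial character does not occur because $C_{\alpha}/\langle T\rangle$ has genus $0$. Hence $\mathbb{Q}(\zeta_{p})\hookrightarrow \mathrm{End}^{0}(JC_{\alpha})$ with $[\mathbb{Q}(\zeta_{p}):\mathbb{Q}]=2\dim JC_{\alpha}$, so $JC_{\alpha}$ is a CM abelian variety. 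A short argument with the central idempotents of $\mathrm{End}^{0}(JC_{\alpha})$ --- a field cannot split nontrivially across distinct isotypical blocks, since each projection $f\mapsto f e_{i}$ is either zero or injective and a dimension count leaves a single block --- shows that each $JC_{\alpha}$ is \emph{isotypical}, i.e.\ isogenous to a power of a single simple factor, and that its isogeny class is pinned down by its CM type. The same applies to $JE_{\gamma_{p}}$, which is an isogeny factor of the isotypical variety $JC_{\gamma_{p}}\sim JE_{\gamma_{p}}^{3}$.

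For part (a) I would first record the numerical match: $\dim JC_{\alpha}=(p-1)/2=\dim B_{0}=\dim B_{j}$ and $\dim JE_{\gamma_{p}}=(p-1)/6=\dim B$, while the multiplicities in \eqref{dec1} and in Theorem \ref{teo2} agree ($3$ for $B_{0}$ and $JC_{1}$, $6$ for $B$ and $JE_{\gamma_{p}}$, $6$ for $B_{j}$ and $JC_{\alpha_{j}}$). Next I would compute the CM type of each $JC_{\alpha}$ from the eigenvalues of $T^{*}$ on $H^{1,0}(C_{\alpha})$, for which the $\zeta_{p}^{-n}$-eigenspace of $y^{p}=x^{\alpha}(x-1)$ has dimension $\langle n\alpha/p\rangle+\langle n/p\rangle+\langle -n(1+\alpha)/p\rangle-1\in\{0,1\}$ (these sum to $(p-1)/2$, as they must); this shows that curves in different $\mathfrak{S}_{3}$-orbits have non-isogenous Jacobians while curves in the same orbit are isogenous, recovering the orbit bookkeeping of Theorem \ref{teo2}. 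Finally I would invoke uniqueness: decomposing both expressions for $JF_{p}$ into simple factors and using that \eqref{dec1} is the isotypical decomposition under $G=\mathrm{Aut}(F_{p})$, each isotypical $JC_{\alpha}$ is forced into a single component $B_{i}$; the distinguished features --- the unique multiplicity-$3$ factor and the unique factor of dimension $(p-1)/6$ --- pin $B_{0}\sim JC_{1}$ and $B\sim JE_{\gamma_{p}}$, and comparison of the remaining CM types yields the multiset identity $\{B_{j}\}=\{JC_{\alpha_{j}}\}$ up to relabeling.

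For part (b) I would work with the decomposition of Corollary \ref{coroKR} \emph{before} grouping into orbits, namely $JF_{p}\sim\prod_{j=1}^{p-2}J(F_{p}/H_{j})$; since $F_{p}/H_{j}\cong C_{\alpha}$ for a suitable $\alpha$, this is literally a product of $(p-2)$ CM abelian varieties of dimension $(p-1)/2$, matching the numerical shape of the decomposition of \cite{A}. The identification is again by CM type: each factor of \cite{A} is an explicit CM abelian variety attached to a Jacobi-sum/Hecke-character datum, and I would match it with the $JC_{\alpha}$ carrying the same CM type, checking that the simplicity criterion of \cite{A} translates into the $\mathfrak{S}_{3}$-orbit condition (equivalently, the Galois-orbit-of-CM-type condition) used here.

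The main obstacle I anticipate is twofold. In (a) it is the isotypicality step: without it, the coincidence of dimensions and multiplicities could in principle hide a genuinely different grouping of simple factors, so the central-idempotent argument (equivalently, the CM-type classification) is precisely what makes the term-by-term matching rigid rather than merely numerical. In (b) it is essentially bookkeeping: translating the number-theoretic indexing of \cite{A} into the $\mathfrak{S}_{3}$-orbit indexing and verifying that the two assignments of CM types agree exactly, with the correct multiplicities and the correct simplicity dictionary.
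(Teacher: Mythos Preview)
Your route via CM theory is genuinely different from the paper's, which proceeds by pure representation theory: it invokes the algorithm of Carocca--Rodr\'{\i}guez \cite{C-R} and verifies, for each subgroup $H$ giving a factor in Theorem~\ref{teo2}, that the induced representation $\rho_{H}=\mathrm{Ind}_{H}^{G}\mathbf{1}$ satisfies $\langle \rho_{H},\rho_{\mathrm{rat}}\rangle=1$, so that $J(F_{p}/H)$ lands in a single rational-irreducible component of~\eqref{dec1}. For (b) the paper likewise observes that the primitive rational idempotents of $\mathbb{Q}[{\mathbb Z}_{p}^{2}]$ arising from Corollary~\ref{coroKR} are literally the endomorphisms Aoki uses. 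Your CM approach has the virtue of explaining \emph{why} the orbit combinatorics governs the isogeny classes (the CM-type computation), whereas the paper's approach plugs directly into the group-algebra machinery that produced~\eqref{dec1} in the first place.

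That said, your matching step in (a) has a real gap. You establish that each $JC_{\alpha}$ is isotypical \emph{as an abstract abelian variety} (a power of a single simple factor), but decomposition~\eqref{dec1} is only the isotypical decomposition \emph{for the $G$-action}: the components $B_{i}^{n_{i}}$ are images of the central idempotents of $\mathbb{Q}[G]$, and nothing you have said prevents a single $B_{i}$ from containing simple factors coming from several different $JC_{\alpha}$'s. Poincar\'e uniqueness matches the multisets of \emph{simple} factors, not arbitrary groupings of them; and the multiplicity/dimension bookkeeping you sketch does not by itself force $B_{0}\sim JC_{1}$, etc.\ (one can cook up abstract counterexamples with the same numerical shape). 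To close the gap along your lines you would need one more ingredient: show that each $B_{i}$ is itself isotypical as an abelian variety --- for instance by observing that the normal subgroup $H\cong\mathbb{Z}_{p}^{2}$ acts on each $G$-isotypical piece $B_{i}^{n_{i}}$ and endows $B_{i}$ with CM by a cyclotomic field of the right degree, after which your central-idempotent argument applies again. With that in hand, both sides become groupings of the \emph{same} Poincar\'e blocks and your dimension/multiplicity matching becomes rigid.
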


\begin{proof}
a) The decomposition of $JF_p$ given in \eqref{dec1} is the
group algebra decomposition of $JF_p$, where each factor
corresponds to a unique rational irreducible representation of
Aut($F_p$)  \cite{B-R}.

An algorithm to find explicit factors in the general group algebra
decomposition for a principally polarized abelian variety with
group action was given in \cite{C-R}. In this case, we can
apply the algorithm by noticing that the factors in the
decomposition given in Theorem \ref{teo2} are Jacobian
varieties of quotients of the Fermat curve $F_p$ by explicit
subgroups $H$ of $G = \textup{Aut}(F_p)$, and computing that
$$
\langle \rho_H , \rho_{\textup{rat}} \rangle = 1
$$
in all cases, where $\rho_H$ is the rational representation of $G$ induced by the trivial representation of $H$ and $\rho_{\textup{rat}}$ is the rational representation of $G$ in $JF_p$, and that
$$
\langle \rho_G , \rho_{\textup{rat}}  \rangle = 0.
$$

It then follows that each factor in the decomposition given in
\eqref{dec1} is isogenous to the corresponding factor in the
decomposition given in Theorem \ref{teo2} as claimed.

\s

b) Observe that the decomposition given  in Theorem \ref{teo1}
is obtained using only the subgroup of order $p^2$ of
Aut($F_p)$. Using the same algorithm mentioned in a) for this
subgroup, we obtain the factors in this decomposition as images
of $JF_p$ under certain concrete primitive rational idempotents
in the rational group algebra of this subgroup. But these
coincide with the endomorphisms considered in \cite{A} to
define the factors in that decomposition.
\end{proof}

\subsection{Some examples}
\subsubsection{}
If $p=7$, then we have only two orbits; $\{1,3,5\}$ and $\{2,4\}$. So in this case
$$JF_{7} \sim JC_{1}^{3} \times JC_{2}^{2} \sim JC_{1}^{3} \times JE_{2}^{6}$$
where $$C_{1}: y^{7}=x(x-1), \quad C_{2}: y^{7}=x^{2}(x-1),\quad E_{2}=C_{2}/\langle R(x,y)=(1/(1-x),-x/y^{3})\rangle\; \mbox{(of genus $1$)}.$$

It is known that $E_2$ is the curve $X_0(49)$.

\s

\subsubsection{}
If $p=11$, then we have only two orbits; $\{1,5,9\}$ and $\{2,3,4,6,7,8\}$. So in this case
$$JF_{11} \sim JC_{1}^{3} \times JC_{2}^{6}$$
where $$C_{1}: y^{11}=x(x-1), \quad C_{2}: y^{11}=x^{2}(x-1).$$


\end{document}